\PassOptionsToPackage{dvipsnames,x11names}{xcolor}

\documentclass[graybox]{svmult}

\usepackage{array}
\usepackage{subfig}
\usepackage{xspace}
\usepackage{enumerate}
\usepackage{tikz}
\usepackage{comment}
\usepackage{stmaryrd}
\usepackage{amsfonts}
\usepackage{graphicx}
\usepackage{epstopdf}
\usepackage{algorithmic}
\usepackage{amsthm}
\usepackage{mathtools}
\usepackage{tcolorbox}
\usepackage{url}
\usepackage[colorlinks=true, linkcolor=OliveGreen, urlcolor=Fuchsia, citecolor=blue]{hyperref}
\usepackage{cleveref}
\usepackage[hyphenbreaks]{breakurl}
\usepackage{pgfplots}

\pgfplotsset{compat=1.18}
\usetikzlibrary{arrows.meta}

\usetikzlibrary{calc, positioning}
\usetikzlibrary{decorations.pathreplacing, calligraphy, arrows, arrows.meta}

\newcommand{\abs}[1]{\left\lvert#1\right\rvert}

\newcommand{\SR}{\text{SR}}

\newcommand{\EE}{\ensuremath{\mathbb{E}}}

\newcommand{\condnum}[1]{\ensuremath{\kappa(#1)}}

\newcolumntype{R}[1]{>{\raggedleft\arraybackslash }b{#1}}
\newcolumntype{L}[1]{>{\raggedright\arraybackslash }b{#1}}
\newcolumntype{C}[1]{>{\centering\arraybackslash }b{#1}}

\DeclareMathOperator{\fl}{fl}

\usepackage{type1cm}        
\usepackage{makeidx}         
\usepackage{graphicx}        
\usepackage{multicol}        
\usepackage[bottom]{footmisc}

\usepackage{newtxtext}       %
\usepackage[varvw]{newtxmath}       

\makeindex             

\begin{document}

\title*{Probabilistic Error Analysis of Limited-Precision Stochastic Rounding: Horner's Algorithm and Pairwise Summation}
\titlerunning{Probabilistic Error Analysis of Limited-Precision Stochastic Rounding}
\author{El-Mehdi El Arar, Massimiliano Fasi, Silviu-Ioan Filip, and Mantas Mikaitis}
\institute{El-Mehdi El Arar, Sorbonne University, CNRS, LIP6, Paris, France, \email{mehdi.elarar@lip6.fr} \\
  Massimiliano Fasi, University of Leeds, Leeds, UK, \email{m.fasi@leeds.ac.uk}\\
  Silviu-Ioan Filip, Université de Rennes, Inria, CNRS, IRISA, Rennes, France, \email{silviu.filip@inria.fr}\\
Mantas Mikaitis, University of Leeds, Leeds, UK, \email{m.mikaitis@leeds.ac.uk}}
%
%
\maketitle

\abstract{Stochastic rounding (SR) is a probabilistic rounding mode that mitigates errors in large-scale numerical computations, especially when prone to stagnation effects. 
Beyond numerical analysis, SR has shown significant benefits in practical applications such as deep learning and climate modelling.
  The definition of classical SR requires that results of arithmetic operations are known with infinite precision. This is often not possible, and when it is, the resulting hardware implementation can become prohibitively expensive in terms of energy, area, and latency. A more practical alternative is \emph{limited-precision SR}, which only requires that the outputs of arithmetic operations are available in higher, finite, precision.
  We extend previous work on limited-precision SR presented in [El Arar et al., \emph{SIAM~J.~Sci.~Comput.}~47(5) (2025), B1227--B1249], which developed a framework to evaluate the trade-off between accuracy and hardware resource cost in SR implementations.
  Within this framework, we study the Horner algorithm and pairwise summation, providing both theoretical insights and practical experiments in these settings when using limited-precision SR.
}

\section{Introduction}
\label{sec:intro}

Finite-precision arithmetic~\cite{mbdj18} is an inherent limitation of computer systems.
In computations comprising long sequences of floating-point operations, it naturally leads to the accumulation of rounding errors, which can degrade numerical accuracy.
Stochastic rounding~\cite{cfhm22,effm26} (SR) addresses this issue by replacing deterministic rounding rules with a probabilistic mechanism that ensures that rounding errors have mean zero.

As a result, SR has gained increasing attention in applications such as iterative solvers, optimization algorithms, and machine learning, where small systematic errors can compound over many arithmetic operations.
However, the unbiased property of SR applies to individual rounding errors and does not necessarily extend to a sequence of operations. For instance, the computation of the variance is biased under SR~\cite{esop23}, for both the textbook and the two-pass algorithms.

Rounding error analysis has been developed to better understand the behavior of algorithms with SR applied to arithmetic operations.
For several algorithms, SR yields probabilistic error bounds that grow as $\mathcal{O}(\sqrt{n}u)$, where $n$ is the problem size and $u$ is the unit roundoff. This is significantly better than deterministic rounding modes, for which worst-case bounds grow as $\mathcal{O}(nu)$. We refer the reader to a survey~\cite{chm21} and a monograph~\cite[Ch.~4~and~5]{thesisarar} on SR for further details.

Implementing SR in hardware requires the generation of random bits to drive the probabilistic rounding decision. El Arar et al.~\cite{effm25} proposed a probabilistic error analysis of SR that only requires a limited number $r$ of random bits. For recursive summation and inner products of length $n$, they suggest setting $r \approx \lceil (\log_2n) / 2\rceil$, which they showed to be an appropriate trade-off point between accuracy and hardware resource cost.

Here, we extend our prior work~\cite{effm25} and study the behaviour of Horner's algorithm for polynomial evaluation and of pairwise summation under limited-precision SR.
The former is used
by \verb+polyval+, \verb+polyvalm+, and \verb+polyfit+ in MATLAB,
by \verb+evalpoly+ in Julia,\footnote{\url{https://docs.julialang.org/en/v1/base/math/\#Base.Math.evalpoly}}
by \verb+numpy.polynomial.polynomial.polyval+ in NumPy,\footnote{\url{https://numpy.org/devdocs/reference/generated/numpy.polynomial.polynomial.polyval.html}}
by polynomial evaluation functions in the GNU Standard Library,\footnote{\url{https://www.gnu.org/software/gsl/doc/html/poly.html}} and
by the \verb+horner+ function in Maple.\footnote{\url{https://www.maplesoft.com/support/help/maple/view.aspx?path=MTM\%2Fhorner}}
Pairwise summation is the default summation algorithm
in Julia,\footnote{\url{https://github.com/JuliaLang/julia/blob/ce9da6b2dc775f2bf201be0b8371db32a2458741/base/reduce.jl\#L534}} and
in the Apache Arrow Library.\footnote{\url{https://github.com/apache/arrow/blob/d08d5e64fcfd8759d3a7089eced3e9a2d7a17f20/cpp/src/arrow/compute/kernels/aggregate_internal.h\#L157}}
Under classical SR, one can derive probabilistic error bounds that grow as $\mathcal{O}\bigl(\sqrt{n}u_p\bigr)$ for Horner's algorithm~\cite{esop22} and as $\mathcal{O}\bigl(\sqrt{\log_2(n)}u_p\bigr)$ for pairwise summation~\cite{esop23}.
For limited-precision SR, we establish probabilistic error bounds proportional to $\sqrt{n} u_p + nu_{p+r}$ for Horner's algorithm and to $\sqrt{\log_2 (n)}u_p + \log_2(n)u_{p+r}$ for pairwise summation, where $p$ denotes the working precision and \mbox{$u_k := 2^{1-k}$}.

\section{Stochastic Rounding}
\label{sec:sr}

Let $\mathbb{F}\subset \mathbb{R}$ denote a normal floating-point number system with $p$ digits of precision, and let $x\in \mathbb{R}$. We denote the smallest precision-$p$ floating-point number no smaller than $x$ by $\llceil x \rrceil_p $, and the largest floating-point number no greater than $x$ by $\llfloor x \rrfloor_p$. In other words, we have
$$ \llceil x \rrceil_p=\min\{y\in \mathbb{F} : y \geq x\},  \quad \llfloor x \rrfloor_p=\max\{y\in \mathbb{F} : y \leq x\},$$
and by definition, $\llfloor x \rrfloor_p  \leq x \leq \llceil x \rrceil_p$, with equality throughout if and only if $x \in\mathbb{F}$.
A non zero real number $x \not\in \mathbb{F}$ has two possible rounding candidates in floating-point arithmetic $\llfloor x \rrfloor_p$ or $\llceil x \rrceil_p$, which coincide if $x \in \mathbb{F}$. \emph{Rounding} is an operation $\fl : \mathbb{R} \to \mathbb{F}$ that maps $x$ to either $\llfloor x \rrfloor_p$ or $\llceil x \rrceil_p$, and it can be shown that the rounded quantity satisfies
\begin{equation*}
    \fl(x) =x(1+\delta), \label{fl(x)}
\end{equation*}
where the relative error $\delta = (\fl(x) - x)/x$ is such that $\abs{\delta} < 2^{1-p} = u_p$.

\begin{definition}[Stochastic rounding]
  \label{def:sr}
Let $x \in \mathbb R \setminus \mathbb{F}$. The stochastic rounding of $x$ to precision-$p$ $\mathbb{F}$
is the Bernoulli random variable
\begin{equation}\label{eq:sr}
    \textnormal{SR}_p(x) =
    \begin{cases}
        \llceil x \rrceil_p,   & \text{with probability\ } q(x), \\
        \llfloor x \rrfloor_p, & \text{with probability\ } 1 - q(x),
    \end{cases}
    \qquad
    q(x) = \dfrac{x - \llfloor x \rrfloor_p}{\llceil x \rrceil_p - \llfloor x \rrfloor_p}.
\end{equation}
When $x \in \mathbb{F}$, we have $\SR_p(x) = x$.
\end{definition}

\def\ticksep{0.2}
\def\intsep{0.2}
\def\arrowsep{0.2}
\def\rtwosep{0.2}
\def\compssep{0.3}

\def\barheight{0.4}
\def\barsep{0.5}
\def\barlength{2.8}
\def\hdis{1.9}

\begin{figure}[t]
  \centering
  \begin{tikzpicture}[every node/.style={
      minimum width=0pt,
      minimum height=0pt,
      inner sep=0pt},semithick]


    \node (A) at (0,0) {};
    \node (B) at (3,0) {};
    \node (C) at (3.3,0) {};
    \node (D) at (8,0) {};

    \draw ($(A)-(\compssep,0)$)--($(D)+(\compssep,0)$);

    \foreach \x in {A,B,C,D} 
    \draw ($(\x)+(0,-\ticksep)$) -- ($(\x)+(0,\ticksep)$);


    \node [above=\ticksep+0.1 of A] {$\llfloor x \rrfloor_p$};
    \node [above=\ticksep+0.1 of B] {$x$};
    \node [xshift=0.5cm, above=\ticksep of C] {$\fl_{p+r}(x)$};
    \node [above=\ticksep+0.1 of D] {$\llceil x \rrceil_p$};

    \draw [Stealth-Stealth]($(A)+(0.5*\intsep,-2*\intsep)$) --
    ($(B)+(-0.5*\intsep,-2*\intsep)$);
    \node [below=2.6*\intsep of $(A)!0.5!(B)$] {\small$x - \llfloor x \rrfloor_p$};
    \draw [Stealth-Stealth]($(B)+(0.5*\intsep,-2*\intsep)$) --
    ($(D)+(-0.5*\intsep,-2*\intsep)$);
    \node [below=2.6*\intsep of $(B)!0.5!(D)$] {\small$ \llceil x \rrceil_p - x$};

    \draw [Stealth-Stealth]($(A)+(0.5*\intsep,-6*\intsep)$) --
    ($(C)+(-0.5*\intsep,-6*\intsep)$);
    \node [below=6.6*\intsep of $(A)!0.5!(C)$] {\small$\fl_{p+r}(x) - \llfloor x \rrfloor_p$};
    \draw [Stealth-Stealth]($(C)+(0.5*\intsep,-6*\intsep)$) --
    ($(D)+(-0.5*\intsep,-6*\intsep)$);
    \node [below=6.6*\intsep of $(C)!0.5!(D)$] {\small$\llceil x \rrceil_p - \fl_{p+r}(x)$};


  \end{tikzpicture}
  \caption{Quantities used in the definitions~\eqref{eq:sr} and~\eqref{eq:sr-imp}.}
  \label{fig:sr}
\end{figure}
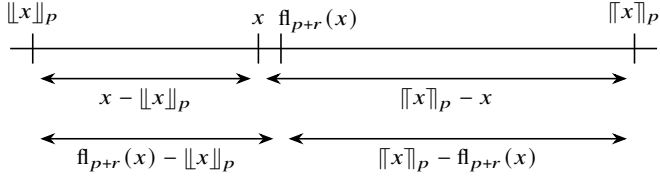

\Cref{fig:sr} depicts the quantities in this definition.
Note that if $x \in \mathbb{F}$, then $q(x) = 0$ and $\SR_p(x) = \llfloor x \rrfloor_p = x$ with probability 1. More generally, for $x \in \mathbb{R}$ we have
\begin{align*}
    \EE(\SR_p(x)) &= q(x)\llceil x \rrceil_p +\big(1-q(x)\big)\llfloor x \rrfloor_p \\
    &=  q(x)(\llceil x \rrceil_p - \llfloor x \rrfloor_p) + \llfloor x \rrfloor_p =x.
\end{align*}

To use Definition~\ref{def:sr}, one must know $x$ with infinite precision, but it is often impossible, or impractical, to compute the exact value of $x$ before rounding.
Therefore, in practice, in $q(x)$ one typically replaces $x$ by $\fl_{p+r}(x)$, a representation of $x$ with $p+r$ bits of precision, for some positive integer $r$. Here $r$ denotes the additional bits of precision available before rounding and, equivalently, the number of random bits needed to perform SR, as we will see later. These quantities are also indicated in Figure~\ref{fig:sr}.

\begin{definition}[limited-precision stochastic rounding]
    Let $x \in \mathbb R \setminus \mathbb{F}$. The limited-precision stochastic rounding of $x$ to precision-$p$ $\mathbb{F}$ using $r$ random bits is the Bernoulli random variable
\begin{equation}
    \label{eq:sr-imp}
    \textnormal{SR}_{p,r}(x) =
    \begin{cases}
        \llceil x \rrceil_p,   & \text{with probability\ } q_r(x), \\
        \llfloor x \rrfloor_p, & \text{with probability\ } 1- q_r(x),
    \end{cases}
    \qquad
    q_r(x) = \frac{\fl_{p+r}(x) - \llfloor x \rrfloor_p}{\llceil x \rrceil_p -  \llfloor x \rrfloor_p}.
\end{equation}
When $x \in \mathbb{F}$, we have $\SR_{p,r}(x) = x$.
\end{definition}

$\SR_{p,r}$ and $\SR_p$ both output $p$-bit precision results. The main difference is that $\SR_{p,r}$ uses $r$ random bits, whereas $\SR_{p}$ essentially assumes an infinite value for $r$. Note that $\fl_{p+r}(x)$ is represented with $p+r$ bits of precision such that
  \begin{equation}
    \label{eq:fl}
    \fl_{p+r}(x) = x(1 + \beta).
  \end{equation}
Similarly to~\cite{effm25}, we take $\fl_{p+r}(x)$ to be the binary representation of $x$ truncated to the first $p+r$ binary digits, for some positive integer $r$.

Unlike $\SR_p$, the limited-precision SR operator is biased, since
\begin{align*}
    \EE\big(\SR_{p,r}(x)\big) &= q_r(x)\llceil x \rrceil_p +\big(1-q_r(x)\big)\llfloor x \rrfloor_p \\
    &=  q_r(x)(\llceil x \rrceil_p - \llfloor x \rrfloor_p) + \llfloor x \rrfloor_p\\
    &= \fl_{p+r}(x),
\end{align*}
and by linearity of the expectation, we have that $\EE\left(\SR_{p,r}(x) -x\right) = \fl_{p+r}(x) -x$.
Let $\delta$ be such that $\SR_{p,r}(x) = x(1+\delta)$. Then, \Cref{eq:fl} yields
\begin{equation}
\label{eq:sr-bias}
    \EE(\delta) = \frac{\fl_{p+r}(x) -x}{x} = \beta.
\end{equation}

\begin{remark}
This formalization of limited-precision SR is already starting to get traction. Indeed, the IEEE P3109 interim report~\cite{ieee25}, which introduces number formats and their arithmetic for Machine Learning, specifies three limited-precision SR variants, offering different tradeoffs between bias and hardware complexity. Called Stochastic[A--C], they differ in how $\fl_{p+r}$ is defined, impacting the value of $\beta$ in~\eqref{eq:fl}. For more details see~\cite{fife25}, \cite[sec.~4.9.3]{ieee25} and~\cite[sec.~3]{effm26}.
\end{remark}

The following result shows that a sequence of errors produced by $\SR_{p,r}$ does not satisfy the mean-independence property. It also provides a decomposition useful to derive tight probabilistic error bounds.

\begin{lemma}[{\cite[lem. 3.10]{effm25}}]
    \label{lem:main-result}
    Let $\delta_1, \delta_2,\ldots, \delta_n$ be random errors produced by a sequence of elementary operations using $\textup{\SR}_{p,r}$, and let $\beta_1, \beta_2,\ldots, \beta_n$ be their corresponding errors incurred by $\fl_{p+r}$. Then, the random variables $\alpha_k = \delta_k - \beta_k$ for $1\leq k \leq n$, are mean independent, that is to say,
    $$
    \EE(\alpha_k \mid \alpha_1,\ldots,\alpha_{k-1}) = \EE(\alpha_k) = 0.
    $$
    Moreover, for all $1\leq i \leq n$,
    \begin{equation*}
      \prod_{k= i}^{n} (1+ \delta_{k}) = \prod_{k= i}^{n} (1+\alpha_{k}) +  \mathcal{B}_i,\qquad
      \mathcal{B}_i = \sum_{\substack{K \in \mathcal{P}(\mathcal I_i)\\ K \neq \mathcal I_i}}
        \left(\prod_{i \in K} (1 + \alpha_{i}) \prod_{j \in \mathcal I_i \setminus K} \beta_{j}\right)
    \end{equation*}
    where $\mathcal I_i = \{k \in \mathbb{N} : i \le k \le n\}$ is such that
        \begin{equation*}
            \abs{\mathcal{B}_i} \leq \gamma_{n-i+1}(u_p +u_{p+r}) -\gamma_{n-i+1}(u_p),\qquad
            \gamma_{m}(x)=(1+x)^{m} -1.
        \end{equation*}
\end{lemma}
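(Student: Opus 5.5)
The proof has three parts: mean independence of the $\alpha_k$, the algebraic identity for the products, and the bound on $\mathcal{B}_i$.

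\textbf{Mean independence.} We condition on the whole history of the computation. Let $\mathcal{F}_{k-1}$ be the $\sigma$-algebra generated by the random rounding decisions of the first $k-1$ operations, so that $\alpha_1,\ldots,\alpha_{k-1}$ are $\mathcal{F}_{k-1}$-measurable. Conditionally on $\mathcal{F}_{k-1}$, the exact result $x_k$ of the $k$th elementary operation is determined, since its operands are fixed. Hence $\beta_k$, defined by $\fl_{p+r}(x_k)=x_k(1+\beta_k)$, is determined as well. The only remaining randomness is the fresh Bernoulli draw in $\SR_{p,r}(x_k)$.

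Repeating the computation leading to~\eqref{eq:sr-bias} conditionally gives
\[
\EE(\delta_k\mid\mathcal{F}_{k-1})=\frac{\fl_{p+r}(x_k)-x_k}{x_k}=\beta_k,
\]
so $\EE(\alpha_k\mid\mathcal{F}_{k-1})=0$. The case $x_k\in\mathbb{F}$ is trivial, since then $\delta_k=\beta_k=0$. The tower property then yields $\EE(\alpha_k\mid\alpha_1,\ldots,\alpha_{k-1})=0$, and taking one more expectation gives $\EE(\alpha_k)=0$.

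\textbf{The identity.} We write $1+\delta_k=(1+\alpha_k)+\beta_k$ and expand the product over $k\in\mathcal I_i$ distributively. Each term corresponds to a choice of a subset $K\subseteq\mathcal I_i$ of indices taking the factor $1+\alpha_k$, with the indices in the complement taking $\beta_k$. The term $K=\mathcal I_i$ is $\prod_{k=i}^{n}(1+\alpha_k)$, and the remaining terms are exactly $\mathcal{B}_i$.

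\textbf{The bound on $\mathcal{B}_i$.} We need the pointwise estimates $\abs{\alpha_k}\le u_p$ and $\abs{\beta_k}\le u_{p+r}$.
\begin{itemize}
\item Since $\fl_{p+r}$ is truncation to $p+r$ bits, $\abs{\beta_k}<u_{p+r}$ holds directly.
\item For $\alpha_k$, note that $\SR_{p,r}(x_k)$ and $\fl_{p+r}(x_k)$ both lie in $[\llfloor x_k\rrfloor_p,\llceil x_k\rrceil_p]$. Hence $\abs{\alpha_k}=\abs{\SR_{p,r}(x_k)-\fl_{p+r}(x_k)}/\abs{x_k}$ is at most the gap $\llceil x_k\rrceil_p-\llfloor x_k\rrfloor_p$ divided by $\abs{x_k}$, which is at most $u_p$ by the standard spacing argument.
\end{itemize}

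With these estimates, the triangle inequality gives, with $m=n-i+1$,
\[
\abs{\mathcal{B}_i}\le\sum_{K\subsetneq \mathcal I_i}(1+u_p)^{|K|}u_{p+r}^{m-|K|}=(1+u_p+u_{p+r})^m-(1+u_p)^m .
\]
This equals $\gamma_m(u_p+u_{p+r})-\gamma_m(u_p)$, by the binomial theorem after removing the $K=\mathcal I_i$ term.

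\textbf{Main obstacle.} The delicate step is the conditioning argument. One must be careful that $\beta_k$ is measurable with respect to the past, so that it is deterministic given the history. One must also be careful that the random bits used at step $k$ are independent of those used earlier. Once that framework is stated precisely, the rest is the unbiasedness calculation~\eqref{eq:sr-bias} applied conditionally.
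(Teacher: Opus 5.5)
Your proof is correct. You obtain $\EE(\delta_k\mid\mathcal{F}_{k-1})=\beta_k$ by conditioning on the history, expand $\prod_{k\in\mathcal I_i}\bigl((1+\alpha_k)+\beta_k\bigr)$ over subsets, and bound term by term using $\abs{\beta_k}<u_{p+r}$ and $\abs{\alpha_k}\le u_p$. The latter bound you rightly derive from $\SR_{p,r}(x_k)$ and $\fl_{p+r}(x_k)$ both lying in $[\llfloor x_k\rrfloor_p,\llceil x_k\rrceil_p]$, rather than the cruder $u_p+u_{p+r}$, and this reproduces exactly the stated constant $\gamma_{n-i+1}(u_p+u_{p+r})-\gamma_{n-i+1}(u_p)$. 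The paper itself gives no proof of this lemma; it only cites \cite[lem.~3.10]{effm25}, and your argument is the standard route to such a result.
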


\section{Horner's algorithm}
\label{sec:horner}

Polynomial evaluation in floating-point arithmetic is known to be sensitive to rounding errors. In particular, intermediate operations may introduce significant numerical inaccuracies, and in some cases catastrophic cancellations can occur. A widely used method for polynomial evaluation is Horner's algorithm, which provides an efficient way to compute a polynomial's value using a sequence of multiply-add operations. 

Throughout the remainder of the paper, we define $\widehat{x} = \SR_{p,r}(x)$. Consider the polynomial $P(x) = \sum_{i=0}^n a_i x^i$. We will now perform a roundoff error analysis for Horner's algorithm
\begin{equation}
  \label{eq:horner}
P(x)= (((a_nx +a_{n-1})x +a_{n-2})x \ldots +a_1)x +a_0
\end{equation}
using precision-$p$ floating-point arithmetic with $\SR_{p,r}$.
In our analysis, we will rely on the condition number of a polynomial, defined by
\begin{equation}\label{eq:cond-num-horner}
    \condnum{P} = \dfrac{\sum_{i=0}^{n} \abs{a_i} \abs{x}^i}{\abs{\sum_{i=0}^{n} a_i x^i}},
\end{equation}
and on the error function
\begin{equation}\label{eq:gamma_n_u}
\gamma_n(u)= (1+u)^{n}-1 = n u + \mathcal{O}(u^2) \ \text{for} \ nu \ll 1.
\end{equation}

The Horner method (\Cref{eq:horner}) can be applied recursively, yielding
\begin{equation*}
  \begin{array}{rlrl}
    \widehat r_0 &= \fl(a_{n}) = a_n, \qquad  &r_0 &= a_n,\\
    \widehat{r}_{2k-1}&=\fl(\widehat{r}_{2k-2}x) = \widehat{r}_{2k-2} x (1+\delta_{2k-1}), \qquad &  r_{2k-1} &= r_{2k-2}x, \\
    \widehat{r}_{2k} &= \fl(\widehat{r}_{2k-1} +a_{n-k}) = (\widehat r_{2k-1} +a_{n-k})(1+\delta_{2k}), \qquad &  r_{2k} &= r_{2k-1} +a_{n-k}, \\
    \widehat{r}_{2n} &= \widehat{P}(x), \qquad & r_{2n} &= P(x),
  \end{array}
\end{equation*}
where the quantities on the left are computed in finite-precision arithmetic, and those on the right are computed using exact arithmetic.
Note that odd steps correspond to products and even ones to multiplications.

Let $\delta_0 = 0$, and from~\cite[Sec.~5.1]{high02} we have
\begin{equation}
    \label{eq:horner_error}
    \widehat{P}(x) = \sum_{i=0}^{n}a_i x^i \prod_{k=2(n -i)}^{2n} (1+\delta_k).
\end{equation}
Note that for $1 \leq k \leq n$, one has $\abs{\delta_{k}} \leq u_p$
and $\abs{\EE(\delta_{k})}  = \abs{\beta_{k}}  \leq u_{p+r} $,
where the $\beta_{k}$ are defined, analogously to \eqref{eq:sr-bias}, as
\begin{equation*}
\beta_{2k-1} = \frac{\fl_{p+r}(\widehat{r}_{2k-1} +a_{n-k}) -(\widehat{r}_{2k-1}+a_{n-k})}{\widehat{r}_{2k-1} + a_{n-k}} \ \text{and} \ \beta_{2k} = \frac{\fl_{p+r}(\widehat{r}_{2k-2}x) -\widehat{r}_{2k-2}x}{\widehat{r}_{2k-2}x}.
\end{equation*}

Next, we give a probabilistic bound on the relative error of this algorithm.

\begin{theorem}
    \label{thm:proba-horner}
    For any $0 < \lambda < 1$, the quantity $\widehat{P}(x)$ in \eqref{eq:horner_error} satisfies
    \begin{equation}
    \label{eq:proba-error_horner}
     \frac{\abs{\widehat P(x) - P(x)}}{\abs{P(x)}}
     \leq \condnum{P} \left(\sqrt{u_p \gamma_{4n}(u_p)} \sqrt{\ln (2 / \lambda)} + \gamma_{2n}(u_p +u_{p+r}) -\gamma_{2n}(u_p)\right),
    \end{equation}
    with probability at least $1-\lambda$, where $\condnum{P}$ and $\gamma_n(u_{p+r})$ are defined in~\eqref{eq:cond-num-horner} and \eqref{eq:gamma_n_u}, respectively.
\end{theorem}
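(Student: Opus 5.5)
We start from the representation \eqref{eq:horner_error} and apply \Cref{lem:main-result} to each product $\prod_{k=2(n-i)}^{2n}(1+\delta_k)$. This splits it into $\prod_{k=2(n-i)}^{2n}(1+\alpha_k) + \mathcal{B}_{2(n-i)}$, where the $\alpha_k=\delta_k-\beta_k$ are mean independent with zero mean. Writing $\psi_i := \prod_{k=2(n-i)}^{2n}(1+\alpha_k) - 1$, we get
\begin{equation*}
\widehat P(x) - P(x) = \sum_{i=0}^n a_i x^i \psi_i + \sum_{i=0}^n a_i x^i \mathcal{B}_{2(n-i)}.
\end{equation*}
Since $\delta_0=0$, each product involves at most $2n$ nontrivial factors. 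Hence the bias term gives $\abs{\mathcal{B}_{2(n-i)}} \le \gamma_{2n}(u_p+u_{p+r})-\gamma_{2n}(u_p)$, because this difference is increasing in the number of factors. The second sum is therefore bounded deterministically by $\sum_i \abs{a_i}\abs{x}^i\,(\gamma_{2n}(u_p+u_{p+r})-\gamma_{2n}(u_p))$. Dividing by $\abs{P(x)}$ gives the $\condnum{P}$ times bias contribution in \eqref{eq:proba-error_horner}.

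For the random part, we handle all $i$ simultaneously with a single martingale, rather than by a union bound. The plan is to follow the approach of \cite{esop22} for Horner under classical SR. Define the sequence $Z_j := \sum_{i} a_i x^i \bigl(\prod_{k\in \mathcal{I}_{i}, k\le j}(1+\alpha_k) - 1\bigr)$, built over the steps $j$ in the order in which the errors occur. In Horner's algorithm the first errors produced affect the terms with large $i$. Under the ordering used in \eqref{eq:horner_error}, the factors multiply progressively, and mean independence of the $\alpha_k$ (\Cref{lem:main-result}) makes $Z_j$ a martingale with $Z_0=0$ and final value $\sum_i a_i x^i \psi_i$. The increment at step $k$ is $\alpha_k \sum_{i : k\in\mathcal I_i} a_i x^i \prod_{\ell<k,\ \ell\in\mathcal I_i}(1+\alpha_\ell)$. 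Since $\abs{\alpha_k}\le \abs{\delta_k}+\abs{\beta_k}$, we need a bound of the form $\abs{\alpha_k}\le u_p$. This holds because $\alpha_k$ is the deviation of a Bernoulli-type rounding error from its mean within an interval of relative width $u_p$. The increment is then bounded by $c_k = u_p (1+u_p)^{k-1}\sum_i \abs{a_i}\abs{x}^i$.

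Azuma--Hoeffding then gives $\abs{Z_{2n}} \le \sqrt{\sum_k c_k^2}\,\sqrt{2\ln(2/\lambda)}$ with probability at least $1-\lambda$. We compute
\begin{equation*}
\sum_{k=1}^{2n} u_p^2(1+u_p)^{2(k-1)} = u_p^2\frac{(1+u_p)^{4n}-1}{(1+u_p)^2-1} = \frac{u_p\,\gamma_{4n}(u_p)}{2+u_p}.
\end{equation*}
So $\sqrt{2\sum_k c_k^2} \le \sqrt{u_p\gamma_{4n}(u_p)}\sum_i\abs{a_i}\abs{x}^i$, which produces exactly the $\sqrt{u_p\gamma_{4n}(u_p)}\sqrt{\ln(2/\lambda)}$ factor. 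Dividing by $\abs{P(x)}$ and combining with the deterministic bias bound yields \eqref{eq:proba-error_horner}.

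The main obstacle is making the martingale rigorous. The $\alpha_k$ must be ordered so that each increment is a bounded multiple of $\alpha_k$, with a coefficient measurable with respect to the past. The inner products $\prod(1+\alpha_\ell)$ must also be controlled: we need to verify $\abs{1+\alpha_\ell}\le 1+u_p$, which is where the exact range of $\alpha_\ell$ matters. If $\abs{\alpha_\ell}$ can only be bounded by $u_p+u_{p+r}$, the first attempt fails. The first thing to try in that case is to use that $\delta_\ell$ takes the two values determined by $\llfloor\cdot\rrfloor_p,\llceil\cdot\rrceil_p$, so that $\alpha_\ell$ lies in an interval of length $u_p$ and Hoeffding's lemma applies with range rather than absolute bound.
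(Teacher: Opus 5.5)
Your proof is correct and follows the paper's route. You use the decomposition of \Cref{lem:main-result}, a deterministic bound $\gamma_{2n}(u_p+u_{p+r})-\gamma_{2n}(u_p)$ on the bias terms, and a martingale/Azuma--Hoeffding bound on $M=\sum_i a_i x^i\psi_i$; the only difference is that the paper invokes \cite[Theorem 4.5]{esop23a} for the martingale step, whereas you rederive it, correctly and with the same constant $\sqrt{u_p\gamma_{4n}(u_p)}$. Your closing worry is unfounded: $\fl_{p+r}(y)$ lies between $\llfloor y\rrfloor_p$ and $\llceil y\rrceil_p$, so each $\alpha_k$ lies in an interval containing $0$ of relative length less than $u_p$, which gives $|\alpha_k|\le u_p$ — exactly the hypothesis the paper's citation tacitly needs.
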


\begin{proof}
    By \cref{lem:main-result}, the random variables $\alpha_1, \alpha_2,\ldots, \alpha_{n-1}$ with $\alpha_j = \delta_j - \beta_j$ are mean independent and for all $ 0 \le i \le n$,
    $$
    \prod_{k= 2(n -i)}^{2n} (1+\delta_{k}) = 
    \prod_{k= 2(n -i)}^{2n} (1+\alpha_{k}) +  \mathcal{B}_i,
    $$
    with
    \begin{equation}
      \label{eq:Bi}
      \abs{\mathcal{B}_i} \leq \abs{\mathcal{B}_1}
      \leq \gamma_{2n}(u_p +u_{p+r}) -\gamma_{2n}(u_p).
    \end{equation}
      Therefore,
      \begin{equation}
        \label{eq:steps-proof}
        \begin{aligned}
            \abs{\widehat P(x) - P(x)} &= \abs{\sum_{i=0}^{n} a_i x^i \left(\prod_{k= 2(n -i)}^{2n} (1+\delta_{k}) -1 \right)}\\
            &= \abs{\sum_{i=0}^{n} a_i  x^i \left(\prod_{k= 2(n -i)}^{2n} (1+\alpha_{k}) +  \mathcal{B}_i -1\right) }\\
            &\leq \abs{\sum_{i=0}^{n} a_i  x^i \left(\prod_{k= 2(n -i)}^{2n} (1+\alpha_{k}) -1\right)} + \abs{\sum_{i=0}^{n} a_i  x^i \mathcal{B}_i}\\
            &\leq \abs{M} + \sum_{i=0}^{n} \abs{a_i  x^i}\abs{\mathcal{B}_i},
        \end{aligned}
    \end{equation}
    By \cite[Theorem 4.5]{esop23a}, we have
    \begin{equation}
      \label{eq:M}
        \abs{M} \le \sum_{i=0}^{n} \abs{a_i  x^i} \left(\sqrt{u_p \gamma_{4n}(u_p)} \sqrt{\ln (2 / \lambda)}\right)
    \end{equation}
    with probability at least $1-\lambda$. Combining~\eqref{eq:Bi}, \eqref{eq:steps-proof}, and~\eqref{eq:M}, we can conclude that
    $$
    \abs{\widehat P(x) - P(x)} \le \sum_{i=0}^{n} \abs{a_i  x^i} \left(\sqrt{u_p \gamma_{4n}(u_p)} \sqrt{\ln (2 / \lambda)} + \gamma_{2n}(u_p +u_{p+r}) -\gamma_{2n}(u_p)\right)
    $$
    with probability at least $1-\lambda$.
  \end{proof}

  \begin{remark}
    \label{rem:1}
    Interestingly, the probabilistic bound in~\eqref{eq:proba-error_horner} is propotional to $\sqrt{n} u_p + n u_{p+r}$. Therefore, for large $r$ the bound grows as $\sqrt{n} u_{p}$, which is equivalent to the bound for exact SR. Moreover, this bound is similar (asymptotically) to those obtained in the analysis of summation and inner products from~\cite{effm25}. Consequently, the same rule of thumb where neither of the two error terms is dominated by the other, namely $r \approx \lceil (\log_2n) / 2\rceil$, applies here as well.
  \end{remark}

  \begin{remark}
    In practice, Horner's algorithm is often implemented using fused multiply-add (FMA) instructions, which compute expressions of the form $ax+b$ with a single rounding. The present analysis does not account for this implementation detail, which we leave as a subject for future work.
  \end{remark}

\section{Pairwise summation}
\label{sec:pairwise}

Let us now consider the problem of computing the sum
$$
s=\sum_{i=1}^n a_i, \qquad a_{i} \in \mathbb{F}.
$$

Higham~\cite{high93s} showed that computing $s$ in floating-point arithmetic with a binary tree of sums leads to a deterministic error bound that grows as $\mathcal{O}(\log_2(n)u)$. Using different techniques to build the martingale, \cite{esop23, hi23, deps25} proved that using SR leads to a probabilistic error bound that grows as $\mathcal{O}(\sqrt{\log_2(n)}u)$. In this section, we investigate the rounding error of pairwise summation under $\SR_{p,r}$.

Let $h$ be the depth of the summation tree. We can assume without loss of generality that $n=2^h$: if in fact $2^{h-1} < n < 2^h$, setting the remaining $2^h- n$ missing inputs to zero would not alter $s$.
Using the same notation as~\cite{esop23}, in finite precision we have
\begin{eqnarray*}
    \widehat{s} = \sum_{i=1}^{2^h} a_i\prod_{j=1}^h\Big(1+\delta_{\lceil i/2^{j} \rceil}^j\Big).
\end{eqnarray*}

\begin{theorem}
    \label{thm:pairwise}
    For all $0 < \lambda <1$, the computed $\widehat{s}$ satisfies under $\SR_{p,r}$
    \begin{equation}
        \label{eq:proba-error_pairwise}
        \frac{\abs{\widehat{s} - s}}{\abs{s}} \leq  \condnum{a}\left(\sqrt{u\gamma_{2\lceil \log_2(n) \rceil}(u)} \sqrt{
            \ln(2/ \lambda)} + \gamma_{\lceil \log_2(n) \rceil}(u_p +u_{p+r}) -\gamma_{\lceil \log_2(n) \rceil}(u_p)\right),
    \end{equation}
    with probability at least $1-\lambda$.
\end{theorem}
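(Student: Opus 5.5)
We follow the same strategy as in the proof of \Cref{thm:proba-horner}: isolate the bias of $\SR_{p,r}$ with \Cref{lem:main-result}, and then apply the existing exact-SR pairwise summation bound to the unbiased part.

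\textbf{Splitting each product.} For each leaf $i$, the product $\prod_{j=1}^h\bigl(1+\delta^j_{\lceil i/2^j\rceil}\bigr)$ involves exactly $h=\lceil\log_2 n\rceil$ rounding errors. These lie on the path from leaf $i$ to the root, and they are produced in order of increasing $j$. We write $\alpha^j_\ell=\delta^j_\ell-\beta^j_\ell$, where $\beta^j_\ell$ is the relative error of $\fl_{p+r}$ at node $(j,\ell)$. Applying \Cref{lem:main-result} to the sequence of operations along this path gives
\[
\prod_{j=1}^h\bigl(1+\delta^j_{\lceil i/2^j\rceil}\bigr)=\prod_{j=1}^h\bigl(1+\alpha^j_{\lceil i/2^j\rceil}\bigr)+\mathcal{B}^{(i)},\qquad \abs{\mathcal{B}^{(i)}}\le \gamma_{h}(u_p+u_{p+r})-\gamma_h(u_p).
\]
The bound on $\abs{\mathcal{B}^{(i)}}$ depends only on $\abs{\beta}\le u_{p+r}$ and $\abs{1+\alpha}\le 1+u_p$. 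It therefore holds deterministically for every leaf.

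\textbf{Decomposing the error.} Subtracting $s=\sum_i a_i$ and using the triangle inequality gives
\[
\abs{\widehat s-s}\le \abs{M}+\sum_{i=1}^{n}\abs{a_i}\abs{\mathcal{B}^{(i)}},\qquad M=\sum_{i=1}^{2^h}a_i\Bigl(\prod_{j=1}^h\bigl(1+\alpha^j_{\lceil i/2^j\rceil}\bigr)-1\Bigr).
\]
The second term is at most $\sum_i\abs{a_i}\,\bigl(\gamma_h(u_p+u_{p+r})-\gamma_h(u_p)\bigr)$.

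\textbf{Bounding $M$.} The $\alpha$'s are mean independent, have zero mean, and satisfy $\abs{\alpha}\le u_p$ (up to the small enlargement $\abs{\alpha}\le u_p+u_{p+r}$, which we absorb as in the Horner case, taking $u=u_p$ in the statement). These are exactly the hypotheses under which the martingale argument of~\cite{esop23} for pairwise summation under exact SR was established. Ordering the nodes level by level, $M$ is the final value of a martingale whose increments are controlled by $\sum_i\abs{a_i}$. The Azuma--Hoeffding inequality then yields
\[
\abs{M}\le \sum_i\abs{a_i}\sqrt{u\gamma_{2h}(u)}\sqrt{\ln(2/\lambda)}
\]
with probability at least $1-\lambda$. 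Adding the two bounds and dividing by $\abs{s}$, with $\condnum{a}=\sum_i\abs{a_i}/\abs{s}$, gives \eqref{eq:proba-error_pairwise}.

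\textbf{Main obstacle.} The main obstacle is justifying that the pairwise result of~\cite{esop23} transfers verbatim to the $\alpha$'s. That proof uses only mean independence and boundedness of the errors, but the order of operations in a tree is not a single linear sequence. We will therefore check that \Cref{lem:main-result}'s mean independence holds for the errors taken in any execution order consistent with the tree, for instance level by level. This holds because each $\alpha$ has zero conditional mean given all previously computed values, since $\fl_{p+r}$ is deterministic given its input.

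The second point to check is that applying \Cref{lem:main-result} path by path is legitimate even though the $\alpha$'s are shared among leaves. This is fine because the $\mathcal{B}^{(i)}$ bound is purely deterministic and requires no independence.
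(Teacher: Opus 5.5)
Your proposal is correct and follows essentially the same route as the paper: split each leaf's product with \Cref{lem:main-result}, bound the bias terms deterministically by $\gamma_h(u_p+u_{p+r})-\gamma_h(u_p)$, and control the unbiased part $M$ with the exact-SR pairwise martingale bound of~\cite{esop23}; your checks on tree ordering and on shared $\alpha$'s are extra care that the paper leaves implicit. The only thing to tidy up is the hedge about ``absorbing'' $\abs{\alpha}\le u_p+u_{p+r}$: since $\alpha=(\SR_{p,r}(x)-\fl_{p+r}(x))/x$ with both $\SR_{p,r}(x)$ and $\fl_{p+r}(x)$ lying in $[\llfloor x\rrfloor_p,\llceil x\rrceil_p]$, you get $\abs{\alpha}\le u_p$ exactly, which is also what your bound on $\mathcal{B}^{(i)}$ already uses.
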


\begin{proof}
    Under $\SR_{p,r}$, the random errors are not mean independent. Like in the proof of Theorem~\ref{thm:proba-horner}, we use~Lemma~\ref{lem:main-result} to separate the martingale and the bias. We have that the random variables $\alpha_1, \alpha_2,\ldots, \alpha_{n-1}$ such that $\alpha_j = \delta_j - \beta_j$ are mean independent and
    $$
    \prod_{j=1}^h \Big(1+\delta_{\lceil i/2^{j} \rceil}^j\Big) =
    \prod_{j=1}^h\Big(1+\alpha_{\lceil i/2^{j} \rceil}^j + \beta_{\lceil i/2^{j} \rceil}^j\Big) =
    \prod_{j=1}^h \Big(1+\alpha_{\lceil i/2^{j} \rceil}^j\Big) +  \mathcal{B}_j,
    $$
    with
    \begin{align*}
        \abs{\mathcal{B}_j} &\leq \abs{\mathcal{B}_1}
        \leq \gamma_{h}(u_p +u_{p+r}) -\gamma_{h}(u_p).
    \end{align*}
    Therefore,
    \begin{equation*}
        \begin{aligned}
            \abs{\widehat s - s} &= \abs{\sum_{i=1}^{2^h} a_i \left(\prod_{j=1}^h \Big(1+\delta_{\lceil i/2^{j} \rceil}^j\Big) -1 \right)}\\
            &= \abs{\sum_{i=1}^{2^h} a_i  \left(\prod_{j= 1}^{h} \Big(1+\alpha_{\lceil i/2^{j} \rceil}^j\Big) +  \mathcal{B}_i -1\right) }\\
            &\leq \abs{\sum_{i=1}^{2^h} a_i \left(\prod_{j= 1}^{h} \Big(1+\alpha_{\lceil i/2^{j} \rceil}^j\Big) -1\right)} + \abs{\sum_{i=1}^{2^h} a_i  \mathcal{B}_i}\\
            &\leq \abs{M} + \sum_{i=1}^{2^h} \abs{a_i}  \abs{\mathcal{B}_i}.
        \end{aligned}
    \end{equation*}
    By \cite[Theorem 3.5]{esop23}, we have
    \begin{equation*}
        \abs{M} \le \sum_{i=1}^{2^h} \abs{a_i} \left(\sqrt{u\gamma_{h}(u)} \sqrt{
            \ln(2/ \lambda)} \right),
    \end{equation*}
    with probability at least $1-\lambda$. Since $n=2^h$, we can conclude that
    $$
    \abs{\widehat s - s} \le \sum_{i=1}^{n} \abs{a_i} U_n,
    $$
    with probability at least $1-\lambda$, where
    $$
    U_n = \sqrt{u_p\gamma_{2\lceil \log_2(n) \rceil}(u_p)} \sqrt{
            \ln(2/ \lambda)} + \gamma_{\lceil \log_2(n) \rceil}(u_p +u_{p+r}) -\gamma_{\lceil \log_2(n) \rceil}(u_p).\qedhere
    $$
\end{proof}

\begin{remark}
  The probabilistic bound in~\eqref{eq:proba-error_pairwise} is propotional to $\sqrt{\log_2(n)} u_p + \log_2(n) u_{p+r}$. For a large $r$, this bound is equivalent to the classical SR bound. Balancing the two terms gives us the corresponding rule of thumb $r \approx \lceil \log_2 (\log_2 (n)) / 2\rceil$. Interestingly, the maximal error accumulated in pairwise summation grows proportionally to $\log_2(n)$, highlighting a key difference with recursive summation, for which the error grows proportionally to $n$. This leads us to posit that the appropriate rule of thumb for algorithms dominated by computation chains with error propagation of length at most $k$ is to consider $r\approx \lceil\log_2(k)/2 \rceil$.
\end{remark}

\section{Numerical experiments}

We present a set of numerical experiments on polynomial evaluation using Horner's algorithm and on summation of floating-point numbers using pairwise summation. We focus on situations that are prone to stagnation when RN is used and investigate the impact of the number of random bits $r$ when $\SR$ is employed. The experiments are performed using the \texttt{srfloat}\footnote{\href{https://github.com/sfilip/srfloat}{https://github.com/sfilip/srfloat}} library, which simulates $\SR_{p,r}$ arithmetic as described in \Cref{sec:sr}.

\subsection{Horner's algorithm}

We evaluate a polynomial with different coefficients using Horner's algorithm, \Cref{eq:horner} at $x=0.9990234375$ using \textit{binary16} arithmetic (\Cref{fig:fp-experiments}), and $x=0.98828125$ in \textit{bfloat16} (\Cref{fig:bf-experiments}), respectively. We compare the results obtained with $\SR_{p,r}$ to those produced by RN. All $\SR_{p,r}$ computations are repeated $30$ times, and we plot the forward error of the average result over the 30 SR instances.

\begin{figure}
    \centering
    \includegraphics[width=0.5\linewidth]{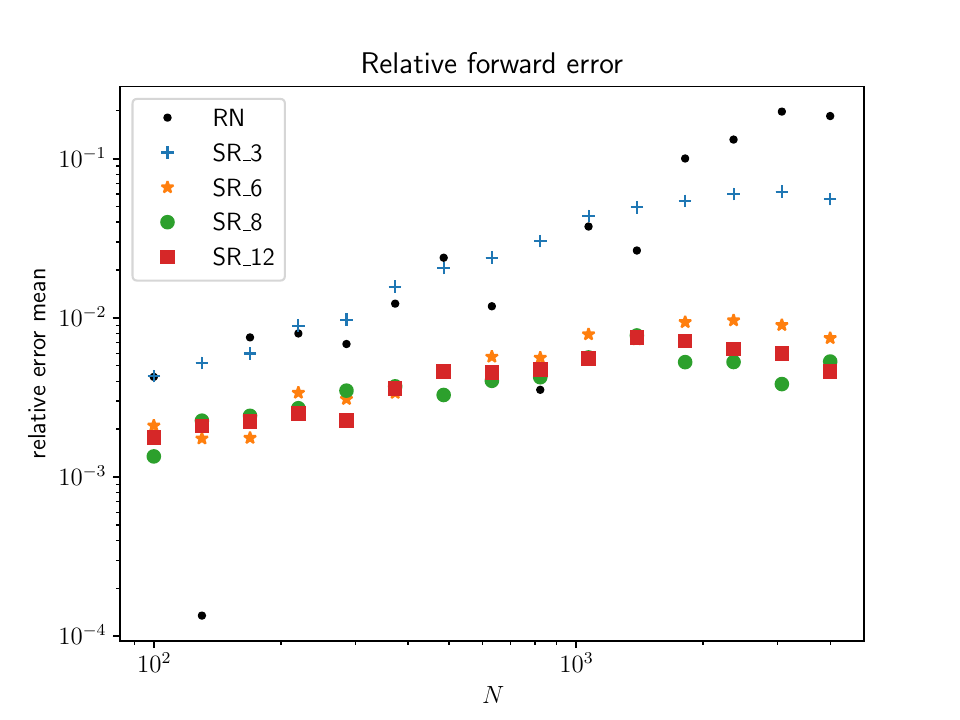}%
    \includegraphics[width=0.5\linewidth]{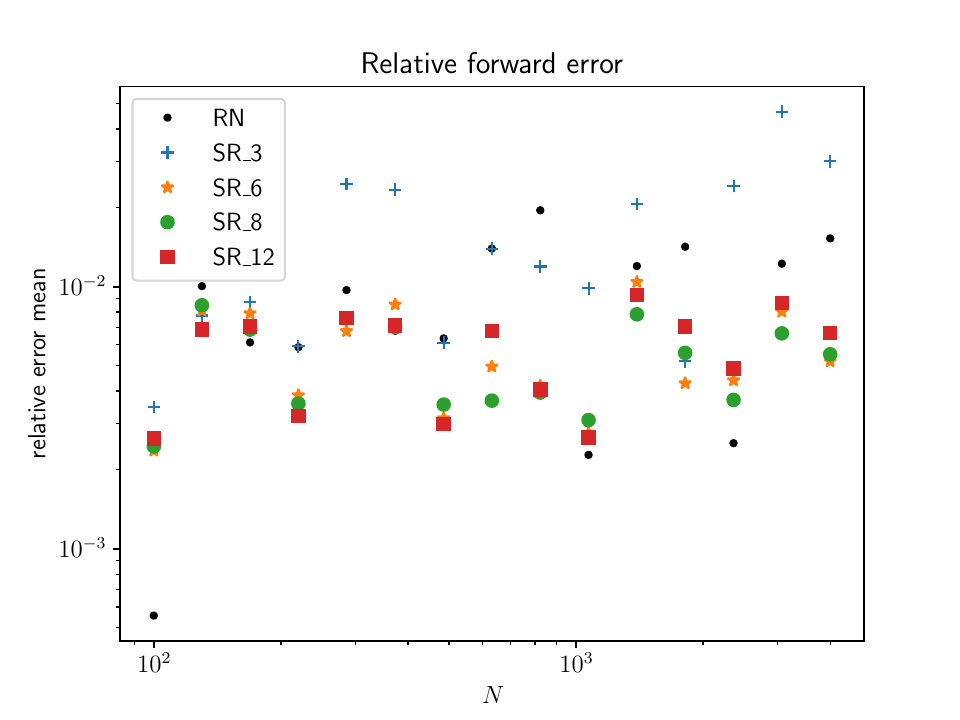}
    \caption{Relative error of RN and $\SR_{11,r}$ in IEEE 754 \textit{binary16} arithmetic ($p=11$). The coefficients are drawn from a uniform distribution over $[0,1]$ (left) and $[-1,1]$ (right).}
    \label{fig:fp-experiments}
\end{figure}

\Cref{fig:fp-experiments} illustrates two different behaviors depending on the distribution of the polynomial coefficients. In the left, where the coefficients are drawn from a uniform distribution over $[0,1]$, the evaluation with \textit{binary16} RN and \textit{binary16} $\SR_{11,r=3}$ clearly exhibits stagnation. In the case of $\SR_{11,r=3}$, the injected noise (with $r=3$) is very small, which slightly mitigates the stagnation effect, but does not eliminate it completely.
In this situation, the relative error grows as the polynomial degree $N$ increases. In contrast, \textit{binary16} $\SR_{11,r}$, for $r=6, 8 \ \text{and} \ 12$, significantly mitigate this effect and yield smaller errors. This behavior confirms the well-known advantage of SR in low precision arithmetic when accumulation of rounding errors leads to stagnation.

In the experiments of the right-hand side of Figure~\ref{fig:fp-experiments}, where the coefficients are uniformly distributed over $[-1,1]$, the relative errors obtained with \textit{binary16} RN and \textit{binary16} $\SR_{11,r}$ are of comparable magnitude, with slight adventage to $\SR_{11,r}$ errors, for $r=6, 8$ and $12$. In this case, the coefficients have mixed signs, which produces rounding errors that are approximately symmetrically distributed. As a consequence, the absorption errors occurring with RN tend to compensate each other, reducing the stagnation effect observed in the nonnegative-coefficient case. Therefore, the benefit of SR becomes less pronounced, and both rounding modes exhibit similar accuracy.

In both panels of~\Cref{fig:fp-experiments}, choosing $r$ close to $ \lceil \log_2(4000)/2 \rceil = 6$
already provides good accuracy. Increasing $r$ beyond this value does not lead to significant additional improvements in the relative error. This observation is consistent with the theoretical rule of thumb suggesting that about $\lceil (\log_2 N)/2 \rceil$ random bits are sufficient to obtain the expected probabilistic error behavior. It can also be observed that some \textit{binary16} RN errors are exceptionally small. This behavior can be explained by the fact that RN rounding may alternately underestimate and overestimate the exact intermediate values during the execution of Horner's scheme. In some favorable cases, these rounding errors partially cancel each other, leading to a final computed value that is very close to the exact result.

\begin{figure}
    \centering
    \includegraphics[width=0.5\linewidth]{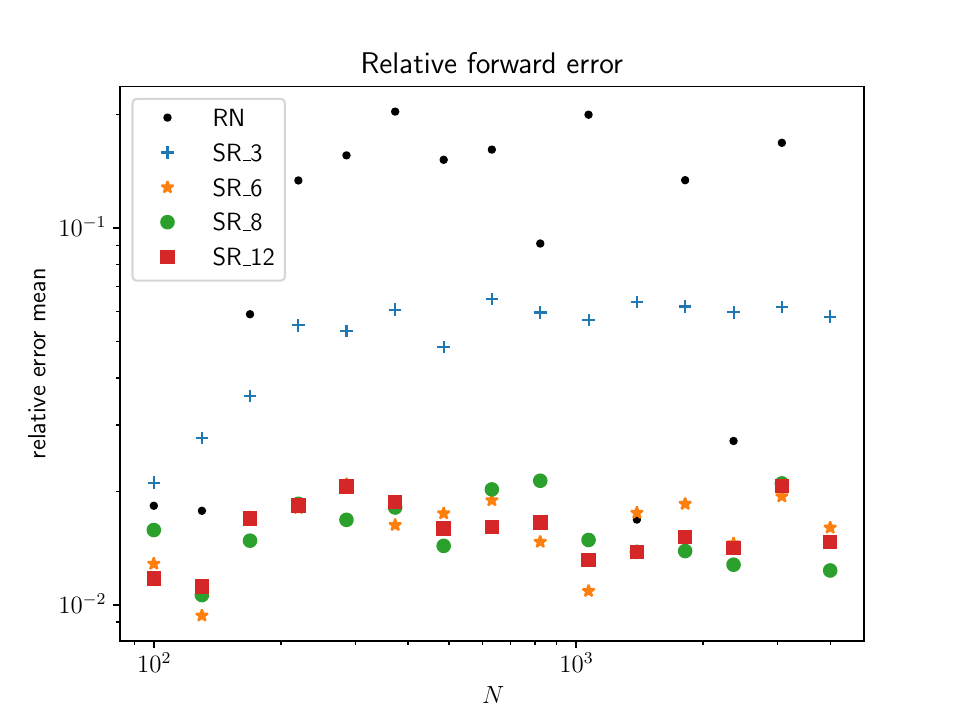}%
    \includegraphics[width=0.5\linewidth]{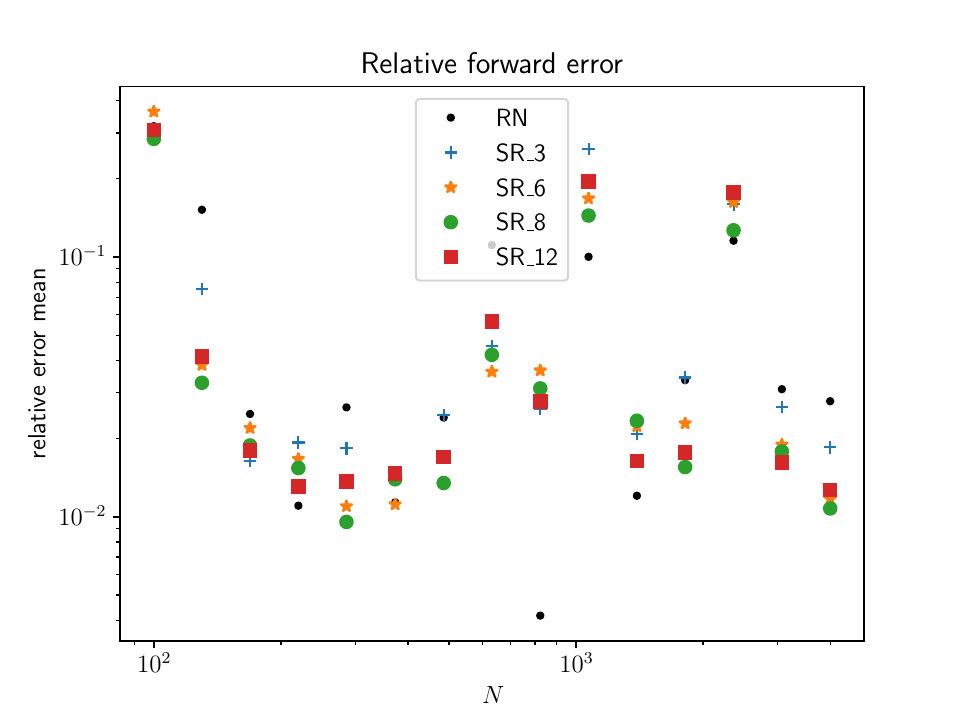}
    \caption{The analogous experiment to~\Cref{fig:fp-experiments} for \textit{bfloat16} arithmetic ($p=8$).}
    \label{fig:bf-experiments}
\end{figure}

\Cref{fig:bf-experiments} illustrates the same qualitative behavior as \Cref{fig:fp-experiments}. In particular, for coefficients in $[0,1]$, the stagnation effect observed with RN is again mitigated when using SR, and the experiments confirm the same guideline for the choice of the number of random bits $r$.
Interestingly, for \textit{bfloat16}, even with a small number of random bits ($r=3$), SR yields smaller errors than RN when the coefficients are drawn from $[-1,1]$.
This behavior can be related to the characteristics of the \textit{bfloat16} format that has a largeer exponent range and a smaller mantissa than \textit{float16}. As a consequence, the spacing between consecutive floating-point numbers is relatively large, so rounding errors are coarse and can accumulate more easily with deterministic rounding. Introducing SR, even with a small number of random bits, helps decorrelate these rounding errors and reduces the accumulation effects observed with RN.

\begin{remark}
  Both figures are evaluated using the largest values $x$ closest to 1 in the target formats, which is challenging for RN in low precision arithmetic and leads to stagnation. For more moderate values, such as $x = 0.5$, the behavior is different. The powers of $x$ decrease rapidly, which delays the occurrence of stagnation. As a result, RN typically provides accurate results, and the advantage of SR is less pronounced.
\end{remark}

\subsection{Pairwise summation}

We now look at pairwise summation in \textit{bfloat16} arithmetic. The goal is to assess the behavior of $\SR_{p,r}$ with a limited number of random bits $r$, and to compare its accuracy with RN. In contrast to the previous experiments, we plot only one run of $\SR_{p,r}$.

\begin{figure}
    \centering
    \includegraphics[width=0.5\linewidth]{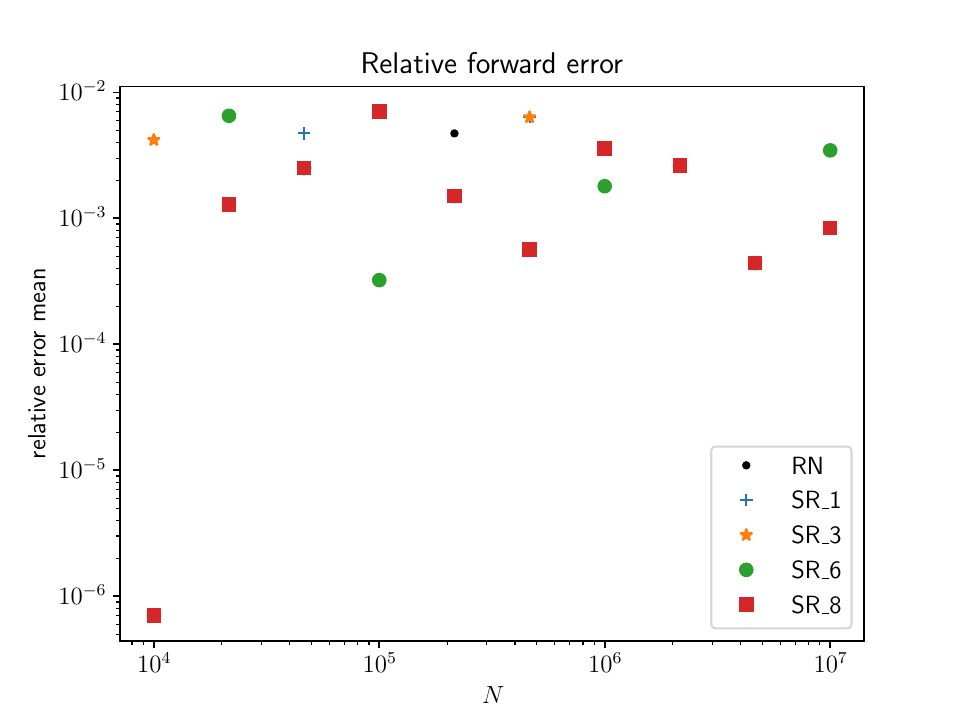}%
    \includegraphics[width=0.5\linewidth]{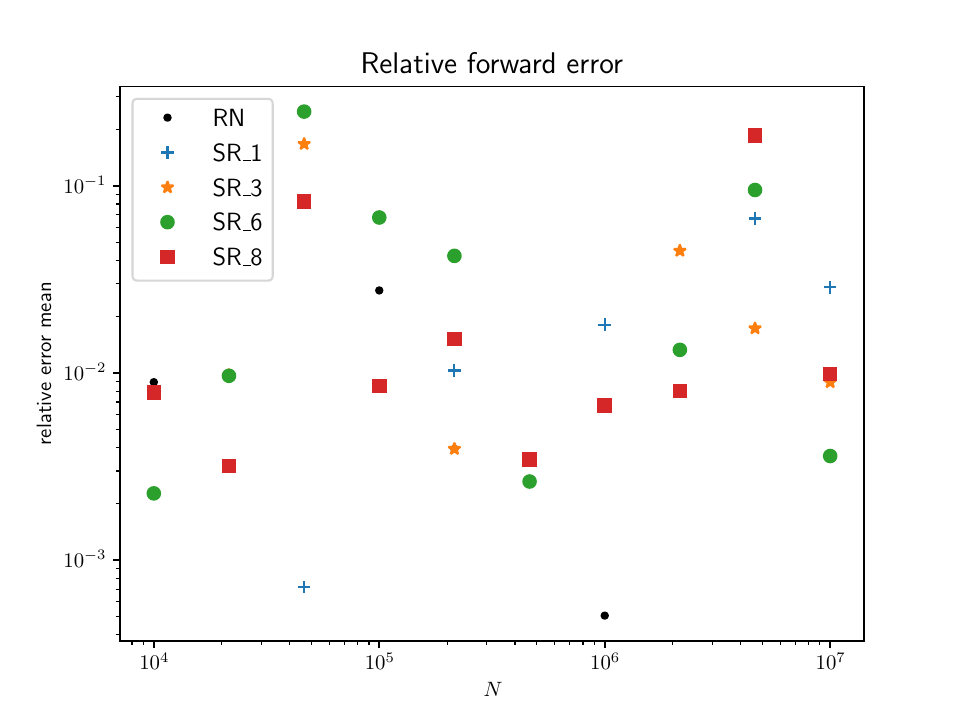}
    \caption{Relative error of RN and $\SR_{8,r}$ in IEEE-754 \textit{bfloat16} arithmetic ($p=8$) for pairwise summation. The floating point values are drawn from a uniform distribution over $[0,10^5]$ (left) and $[-10^5,10^5]$ (right).}
    \label{fig:bf-pairwise}
\end{figure}

The results presented in \Cref{fig:bf-pairwise} show that, the relative errors obtained with SR and RN are of comparable magnitude in both settings. This indicates that, unlike in more ill-conditioned accumulation patterns, the pairwise summation algorithm avoids stagnation, which reduces the potential benefit of SR.
Moreover, the experiments are in agreement with the theoretical analysis. In particular, choosing $r \approx \lceil \log_2(\log_2(10^7))/2 \rceil \approx 3$ is sufficient to achieve good accuracy. Increasing $r$ beyond this value does not lead to noticeable improvements.

\section{Conclusion}
\label{sec:conclusion}

We investigated the impact of limited-precision SR on two fundamental numerical algorithms: polynomial evaluation using Horner's algorithm and pairwise summation. By explicitly accounting for the number of random bits $r$, we derived probabilistic error bounds under $\SR_{p,r}$ arithmetic. The bounds are proportional to $\sqrt{n} u_p + n u_{p+r}$ for the Horner algorithm and to $\sqrt{\log_2(n)} u_p + \log_2(n) u_{p+r}$ for pairwise summation. Our results confirm the same model of probabilistic bound proved before for recursive summation and inner product under $\SR_{p,r}$.
In particular, to choose an appropriate $r$, the rule of thumb for algorithms dominated by computation chains with error propagation of length at most $k$ is to consider $r\approx \lceil\log_2(k)/2 \rceil$. Increasing $r$ beyond this threshold yields only marginal improvements.
The numerical experiments corroborate the theoretical findings, in particular, the rule of thumb remains valid for both situations, with and without stagnation.

\bibliographystyle{spmpsci}
\bibliography{references}

\end{document}